\newtheorem*{theorem}{Theorem}
\newtheorem*{lemma}{Lemma}
\newtheorem*{corollary}{Corollary}
\def\eq#1{{\rm(\ref{E#1})}}
\def\Eq#1#2{\ifthenelse{\equal{#1}{*}}
  {\begin{equation*}\begin{aligned}#2\end{aligned}\end{equation*}}
  {\begin{equation}\begin{aligned}\label{E#1}#2\end{aligned}\end{equation}}}
\newcommand{\R}{\mathbb{R}}
\newcommand{\N}{\mathbb{N}}
\newcommand{\A}{\mathscr{A}}
\newcommand{\G}{\mathscr{G}}
\renewcommand{\H}{\mathscr{H}}
\newcommand{\AG}{\mathscr{A}\otimes\mathscr{G}}
\def\maple#1{{\begin{verbatim}> {#1}\end{verbatim}}}
\begin{document}
\begin{flushright}
Comput. Math. Appl. \textbf{58} (2009) 334--340. \\
\href{http://dx.doi.org/10.1016/j.camwa.2009.03.107}{doi: 10.1016/j.camwa.2009.03.107} \\[1cm]
\end{flushright}
\vspace{5mm}

\title[Invariance equation for two variable Gini means]
{Computer aided solution of the invariance equation for two-variable Gini means}
\author[Sz. Baják]{Szabolcs Baják}
\author[Zs. Páles]{Zsolt Páles}

\email{\{bajaksz,pales\}@science.unideb.hu}
\address{Institute of Mathematics, University of Debrecen, 
4010 Debrecen, Pf 12, Hungary}
\date\today
\subjclass[2000]{39B12, 39-04, 26-04}
\keywords{Invariance equation, homogeneous means, Gini means, Gauss composition, 
computer algebra}
\thanks{\textit{Corresponding author:} Zsolt Páles, 
Institute of Mathematics, University of Debrecen, 
4010 Debrecen, Pf 12, Hungary}
\thanks{This research was supported by the Hungarian
Scientific Research Fund (OTKA) Grants K-62316, NK-68040.}

\begin{abstract}
Our aim is to solve the so-called invariance equation in the class of two-variable 
Gini means $\{G_{p,q}:p,q\in\R\}$, i.e., to find necessary and sufficient 
conditions on the 6 parameters $a,b,c,d,p,q$ such that the identity
\[
   G_{p,q}\big(G_{a,b}(x,y),G_{c,d}(x,y)\big)=G_{p,q}(x,y) \qquad (x,y \in \R_+)
\]
be valid. We recall that, for $p\neq q$, the Gini mean $G_{p,q}$ is defined by
\[
  G_{p,q}(x,y):=\left(\dfrac{x^p+y^p}{x^q+y^q}\right)^{\frac1{p-q}}\qquad (x,y \in \R_+).
\]
The proof uses the computer algebra system Maple V Release 9 to 
compute a Taylor expansion up to 12th order, which enables
us to describe all the cases of the equality.
\end{abstract}

\maketitle

\section{Introduction}

Let $\R_+$ denote the set of positive real numbers
throughout this paper. A two-variable continuous
function $M:\R_+^2\to \R_+$ is called a \emph{mean} on $\R_+$ if  
\Eq{M}{
  \min(x,y)\leq M(x,y)\leq\max(x,y)\qquad(x,y\in \R_+)
}
holds. If both inequalities in \eq{M} are strict whenever $x\neq y$,
then $M$ is called a \emph{strict mean} on $\R_+$.

Given two means $M,N:\R_+^2\to \R_+$ and $x,y\in\R_+$, 
the iteration sequence
\Eq{*}{
    x_1 & :=x,\qquad &
    y_1 & :=y, \\
  x_{n+1} & :=M(x_n,y_n),\qquad &
  y_{n+1} & :=N(x_n,y_n)\qquad\quad (n\in\N)
}
is said to be the \emph{Gauss-iteration} determined by the pair
$(M,N)$ with the initial values $(x,y)\in \R_+^2$. It is well-known
(cf.\ \cite{BB87}, \cite{DarPal02c}) that if $M$ and $N$ are strict means then the
sequences $(x_n)$ and $(y_n)$ are convergent and have equal limits
$M\otimes N(x,y)$ which is a strict mean of the values $x$ and $y$.
The mean $M\otimes N$ defined by this procedure is called the
\emph{Gauss composition} of $M$ and $N$.

A key result characterizing the Gauss composition of means is
the following statement: Given two strict means $M,N:\R_+^2\to \R_+$,
their Gauss composition $K=M\otimes N$ is the unique strict mean
solution $K$ of the functional equation
\Eq{MNK}{
  K\big(M(x,y),N(x,y)\big)=K(x,y)\qquad(x,y\in \R_+)
}
which is called the \emph{invariance equation}.

The simplest example when the invariance equation holds is the
well-known identity
\Eq{*}{
  \sqrt{xy}=\sqrt{\dfrac{x+y}2\cdot\dfrac{2xy}{x+y}}\qquad(x,y\in\R_+),
}
that is,
\Eq{*}{
  \G(x,y)=\G\big(\A(x,y),\H(x,y)\big)\qquad(x,y\in\R_+),
}
where $\A,\G$, and $\H$ stand for the two-variable arithmetic,
geometric, and harmonic means, respectively. Another less trivial
invariance equation is the identity 
\Eq{*}{
  \AG(x,y)=\AG\big(\A(x,y),\G(x,y)\big)\qquad(x,y\in\R_+),
}
where $\AG$ denotes Gauss' \textit{arithmetic-geometric mean} defined by
\Eq{*}{
  \AG(x,y)=\bigg( \frac2{\pi}\int_0^{\frac{\pi}2}
   \frac{dt}{\sqrt{x^2\cos^2t+y^2\sin^2 t}} \bigg)^{-1}\qquad
   (x,y\in\R_+). 
}

The invariance equation in more general classes of means has recently been 
studied extensively by many authors in various papers. The invariance of the arithmetic 
mean $\A$ with respect to two quasi-arithmetic means was first investigated by Matkowski
\cite{Mat99a} under twice continuous differentiability assumptions concerning
the generating functions of the quasi-arithmetic means. These regularity
assumptions were weakened step-by-step by Dar\'oczy, Maksa, and Páles in the
papers \cite{DarMakPal00}, \cite{DarPal01a}, and finally this problem was completely 
solved assuming only continuity of the unknown functions involved \cite{DarPal02c}.
The invariance equation involving three weighted quasi-arithmetic means was
studied by Burai \cite{Bur06}, \cite{Bur07} and Jarczyk--Mat\-kowski
\cite{JarMat06}, Jarczyk \cite{Jar07}. The final answer (where no additional
regularity assumptions are required) has been obtained in \cite{Jar07}.
In a recent paper, we have studied the invariance of the arithmetic mean with 
respect to two so-called generalized quasi-arithmetic mean under four times
continuous differentiability assumptions \cite{BajPal09a}.
The invariance of the arithmetic mean with respect to Lagrangian means was the
subject of investigation of the paper \cite{Mat05} by Matkowski.
The invariance of the arithmetic, geometric, and harmonic means with respect
to the so-called Beckenbach--Gini means was studied by Matkowski in \cite{Mat02a}.
Pairs of Stolarsky means for which the geometric mean is invariant were
determined by B{\l}asi{\'n}ska-Lesk--G{\l}a\-zowska--Matkowski
\cite{BlaGlaMat03}. The invariance of the arithmetic mean with respect to
further means was studied by G{\l}azowska--Jarczyk--Matkowski
\cite{GlaJarMat02} and Domsta--Matkowski \cite{DomMat06}.

An important class of two variable homogeneous means are the so-called
Gini means (cf.\ Gini \cite{Gin38}). Given two parameters $p,q\in\R$,
the two-variable mean $G_{p,q}:\R_+^2\to \R_+$ is defined by the 
following formula
\Eq{*}{
  G_{p,q}(x,y)=
  \begin{cases}
  	\bigskip
  	\left(\dfrac{x^p+y^p}{x^q+y^q}\right)^{\frac1{p-q}} 
           & \mbox{ for } p\neq q,\\
  	\exp \left({\dfrac{x^p \ln x + y^p \ln y}{x^p+y^p}}\right) 
           & \mbox{ for } p=q,
  \end{cases}
}
for $x,y \in\R_+$.

The class of Gini means is a generalization of the class of power means,
since taking $q=0$, we immediately get the power (or H\"{o}lder) mean of 
exponent $p$. 

The aim of this paper is to solve the invariance equation in the class
of Gini means, i.e., to solve \eq{MNK} when each of the means $M,N$,
and $K$ is a Gini mean. More precisely, we want to describe the set of all 
6-tuples $(a,b,c,d,p,q)$ such that the identity
\Eq{IE}{
  G_{p,q}\big(G_{a,b}(x,y),G_{c,d}(x,y)\big)=G_{p,q}(x,y)
    \qquad(x,y\in\R_+)
}
holds. The main result of this paper is contained in the following theorem.

\begin{theorem}
Let $a,b,c,d,p,q\in\R$. Then the invariance equation \eq{IE} is satisfied if and only if 
one of the following possibilities hold:
\begin{enumerate}[(i)]
\item $a+b=c+d=p+q=0$, i.e., all the three means are equal to the geometric mean,
\item $\{a,b\}=\{c,d\}=\{p,q\}$, i.e., all the three means are equal to each other,
\item $\{a,b\}=\{-c,-d\}$ and $p+q=0$, i.e., $G_{p,q}$ is the geometric mean and 
      $G_{a,b}=G_{-c,-d}$,
\item there exist $u,v\in\R$ such that $\{a,b\}=\{u+v,v\}$, $\{c,d\}=\{u-v,-v\}$, and 
      $\{p,q\}=\{u,0\}$ (in this case, $G_{p,q}$ is a power mean),
\item there exists $w\in\R$ such that $\{a,b\}=\{3w,w\}$, $c+d=0$, and $\{p,q\}=\{2w,0\}$
      (in this case, $G_{p,q}$ is a power mean and $G_{c,d}$ is the geometric mean),
\item there exists $w\in\R$ such that $a+b=0$, $\{c,d\}=\{3w,w\}$, and $\{p,q\}=\{2w,0\}$
      (in this case, $G_{p,q}$ is a power mean and $G_{a,b}$ is the geometric mean).
\end{enumerate}
\end{theorem}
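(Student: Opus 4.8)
The plan is to use the homogeneity and symmetry of Gini means to reduce \eq{IE} to a single-variable real-analytic functional equation, to convert that equation into a finite system of polynomial equations for the parameters via a Taylor expansion, to solve this system with computer algebra, and finally to verify the resulting families directly.

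\emph{Reduction to one variable.} Every Gini mean is symmetric and positively homogeneous of degree $1$, so for each pair $(r,s)$ the map $t\mapsto G_{r,s}(e^{t},e^{-t})$ is positive, even and real-analytic; I set $\gamma_{r,s}(t):=\log G_{r,s}(e^{t},e^{-t})$, so that $\gamma_{r,s}(0)=0$ and, for $r\ne s$,
\[
\gamma_{r,s}(t)=\frac{\log\cosh(rt)-\log\cosh(st)}{r-s}
=\frac{r+s}{2}\,t^{2}-\frac{(r+s)(r^{2}+s^{2})}{12}\,t^{4}+\cdots ,
\]
the coefficient of $t^{2k}$ being a symmetric polynomial in $r,s$ divisible by $r+s$ (the series extends analytically to $r=s$). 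Writing a general pair as $(x,y)=(e^{m+t},e^{m-t})$ and using homogeneity, \eq{IE} becomes equivalent to
\[
\frac{\gamma_{a,b}(t)+\gamma_{c,d}(t)}{2}
+\gamma_{p,q}\!\left(\frac{\gamma_{a,b}(t)-\gamma_{c,d}(t)}{2}\right)=\gamma_{p,q}(t)
\qquad(t\in\R);
\]
since both sides are real-analytic in $t$ near $0$, this holds on $\R$ if and only if all Taylor coefficients at $t=0$ agree.

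\emph{Polynomial system and its solution.} Expanding both sides up to order $t^{12}$ with Maple yields six polynomial equations $E_{2}=E_{4}=\cdots=E_{12}=0$ in $a,b,c,d,p,q$ (the odd-order coefficients vanish automatically); the first one is $a+b+c+d=2(p+q)$, the others being much more involved. The core of the argument — and the step I expect to be the real obstacle — is to determine \emph{all} solutions of this system. In practice one must break it into cases (according, e.g., to whether $p+q=0$, whether $G_{p,q}$ degenerates to a power mean, i.e. $pq=0$, or to the geometric mean, and similarly for $G_{a,b},G_{c,d}$), resolve each branch by elimination / Gr\"obner-basis computations, and then recognise the solution set as exactly the union of the families (i)--(vi). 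A posteriori twelve orders turn out to be enough: the variety cut out by $E_{2},\dots,E_{12}$ coincides with $\mathrm{(i)}\cup\cdots\cup\mathrm{(vi)}$, so no higher-order conditions are needed; this is what the sufficiency check below also confirms.

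\emph{Sufficiency.} Conversely, I would check directly that each of (i)--(vi) satisfies \eq{IE}. Cases (i) and (ii) are the trivialities $\G(\G,\G)=\G$ and $M(M,M)=M$. For (iii), the identity $G_{-c,-d}(x,y)=xy/G_{c,d}(x,y)$ gives $G_{a,b}(x,y)G_{c,d}(x,y)=xy$, whence $\G(G_{a,b},G_{c,d})=\sqrt{xy}=\G$. For (iv), $G_{p,q}$ is the power mean of exponent $u$, and
\[
\frac{x^{u+v}+y^{u+v}}{x^{v}+y^{v}}+\frac{x^{u-v}+y^{u-v}}{x^{-v}+y^{-v}}
=\frac{(x^{u}+y^{u})(x^{v}+y^{v})}{x^{v}+y^{v}}=x^{u}+y^{u},
\]
which is precisely the invariance of the $u$-th power mean with respect to $(G_{u+v,v},G_{u-v,-v})$. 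For (v), $G_{p,q}$ is the power mean of exponent $2w$, $G_{c,d}=\G$, and
\[
\frac{x^{3w}+y^{3w}}{x^{w}+y^{w}}+(xy)^{w}=x^{2w}+y^{2w},
\]
giving the invariance of the $2w$-th power mean with respect to $(G_{3w,w},\G)$; finally (vi) follows from (v) since \eq{IE} is symmetric in the pairs $(a,b)$ and $(c,d)$, $G_{p,q}$ being symmetric. Together with the necessity part this would complete the proof.
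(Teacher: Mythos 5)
Your reduction to one variable is sound and is essentially the same as the paper's: your identity for $\gamma_{r,s}$ and the equation
\[
\tfrac{1}{2}\big(\gamma_{a,b}(t)+\gamma_{c,d}(t)\big)
+\gamma_{p,q}\!\Big(\tfrac{1}{2}\big(\gamma_{a,b}(t)-\gamma_{c,d}(t)\big)\Big)=\gamma_{p,q}(t)
\]
is just the logarithm of the paper's function $F$, and your sufficiency verifications for (i)--(vi) match the paper's (up to the degenerate subcases $u=0$ in (iv) and $w=0$ in (v)--(vi), where the power-mean formula you use has a zero exponent and one must instead observe that the parameters fall under case (iii), respectively that all three means are geometric). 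The problem is the necessity half. You reduce it to ``determine all solutions of $E_2=\cdots=E_{12}=0$'' and then only promise that a case split plus elimination or Gr\"obner bases will ``recognise the solution set as exactly (i)--(vi).'' That is precisely the content of the theorem, and it is not routine: a direct Gr\"obner computation in the six parameters is not obviously feasible, and nothing in your text exhibits the case analysis, the eliminations, or the certificate that no further solutions exist. The paper's proof shows what is actually needed: a change of variables $w,v,t,r,s$ adapted to the symmetries, solving $C_4=0$ for $t$ and $C_6=0$ for $r$, recognizing the common factor $s(v-w)(v+w)$ in $C_8,C_{10},C_{12}$ (which is exactly what produces the families (iv), (v), (vi)), and finally, in the remaining case $s(v-w)(v+w)\neq0$, a resultant argument (resultants of $P_8$ with $P_{10}$ and $P_{12}$ in $s$, then a resultant in $z=w/v$ which is a nonzero constant) to prove there are no other solutions. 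None of this, nor any substitute for it, appears in your proposal.

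A further logical slip: you claim that ``twelve orders turn out to be enough'' and that ``the sufficiency check below also confirms'' this. Sufficiency only shows that each of the families (i)--(vi) satisfies the full equation; it says nothing about whether the variety cut out by $E_2,\dots,E_{12}$ contains points outside those families. That containment is exactly the missing necessity computation, so the a posteriori justification is circular as stated. In short: correct framework, correct sufficiency modulo trivial degenerate cases, but the core of the proof --- solving the truncated polynomial system and showing it admits no solutions beyond (i)--(vi) --- is asserted rather than proved.
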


As an obvious consequence, we obtain the following solution for the so-called 
Matkowski--Sut\^o equation, i.e., when $G_{p,q}$ is equal to the arithmetic mean in \eq{IE}.

\begin{corollary}
Let $a,b,c,d\in\R$. Then the Matkowski--Sut\^o equation 
\Eq{*}{
  G_{a,b}(x,y)+G_{c,d}(x,y)=x+y\qquad(x,y\in\R_+)
}
is satisfied if and only if one of the following possibilities hold:
\begin{enumerate}[(i)]
\item $\{a,b\}=\{c,d\}=\{1,0\}$, i.e., the two means are equal to the arithmetic mean,
\item there exist $v\in\R$ such that $\{a,b\}=\{1+v,v\}$, $\{c,d\}=\{1-v,-v\}$,
\item $\{a,b\}=\{\frac32,\frac12\}$ and $c+d=0$ (in this case, $G_{c,d}$ is the geometric mean),
\item $a+b=0$ and $\{c,d\}=\{\frac32,\frac12\}$ (in this case, $G_{a,b}$ is the geometric mean).
\end{enumerate}
\end{corollary}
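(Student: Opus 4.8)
The plan is to deduce the corollary from the Theorem by recognizing the Matkowski--Sut\^o equation as the invariance equation \eq{IE} for a particular choice of the outer mean $G_{p,q}$. The starting observation is that the arithmetic mean is itself a member of the Gini family: from the defining formula, $G_{1,0}(x,y)=\frac{x+y}{2}=\A(x,y)$, and since $G_{p,q}=G_{q,p}$, the arithmetic mean corresponds precisely to the unordered parameter pair $\{p,q\}=\{1,0\}$. Dividing the Matkowski--Sut\^o equation by $2$ rewrites it as
\Eq{*}{
  \frac{G_{a,b}(x,y)+G_{c,d}(x,y)}{2}=\frac{x+y}{2}\qquad(x,y\in\R_+),
}
which is exactly
\Eq{*}{
  \A\big(G_{a,b}(x,y),G_{c,d}(x,y)\big)=\A(x,y)\qquad(x,y\in\R_+),
}
i.e.\ equation \eq{IE} with $G_{p,q}=\A$, that is, with $\{p,q\}=\{1,0\}$.

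The second step is simply to specialize the six cases of the Theorem to $\{p,q\}=\{1,0\}$, for which $p+q=1$. Cases (i) and (iii) both require $p+q=0$ and are therefore vacuous here. In case (ii) the condition $\{a,b\}=\{c,d\}=\{p,q\}$ forces $\{a,b\}=\{c,d\}=\{1,0\}$, giving corollary case (i). In case (iv) the requirement $\{p,q\}=\{u,0\}=\{1,0\}$ fixes $u=1$, so that $\{a,b\}=\{1+v,v\}$ and $\{c,d\}=\{1-v,-v\}$, which is corollary case (ii). In cases (v) and (vi) the requirement $\{p,q\}=\{2w,0\}=\{1,0\}$ fixes $w=\frac12$, whence $\{3w,w\}=\{\frac32,\frac12\}$; these yield corollary cases (iii) and (iv), respectively.

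Since the corollary is a direct specialization of the already-proved classification, there is no genuine obstacle. The only points requiring minor care are the identification of $\A$ with $G_{1,0}$ (together with the use of the symmetry $G_{p,q}=G_{q,p}$, which lets us treat the parameters as an unordered pair), and the bookkeeping confirming that each of the four surviving families matches the corresponding family in the corollary, the excluded cases (i) and (iii) being precisely those in which the outer mean is forced to be the geometric rather than the arithmetic mean.
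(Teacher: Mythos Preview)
Your proof is correct and follows exactly the approach the paper indicates: the corollary is stated as ``an obvious consequence'' of the Theorem obtained by taking $G_{p,q}=\A$, i.e., $\{p,q\}=\{1,0\}$, and you have carefully carried out precisely this specialization. The case-by-case bookkeeping is accurate, including the elimination of Theorem cases (i) and (iii) via $p+q=1\neq0$.
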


In view of the homogeneity of Gini means, identity \eq{IE} is equivalent to the equation
\Eq{IE+}{
  \frac{G_{p,q}\big(G_{a,b}(\operatorname{e}^{x},\operatorname{e}^{-x}),
	G_{c,d}(\operatorname{e}^{x},\operatorname{e}^{-x})\big)}
        {G_{p,q}(\operatorname{e}^{x},\operatorname{e}^{-x})}=1 \qquad (x\in\R).
}
The main and simple idea of the proof of our Theorem is to compute the Taylor expansion of 
the function $F:\R\to\R$ defined by
\Eq{F}{
  F(x):=\frac{G_{p,q}\big(G_{a,b}(\operatorname{e}^{x},\operatorname{e}^{-x}),
	G_{c,d}(\operatorname{e}^{x},\operatorname{e}^{-x})\big)}
        {G_{p,q}(\operatorname{e}^{x},\operatorname{e}^{-x})}
}
at the point $x=0$. % of 12th order using the computer algebra package Maple V Release 9. 
Using the symmetry of Gini means, it also follows that $F$ is an even function. 
Therefore all coefficients $C_k$ of odd order in the Taylor expansion of $F$ 
are equal to zero automatically. The validity of \eq{IE+} yields that all of the even 
order coefficients also vanish. To derive the necessity of the conditions in 
our Theorem, we will only use the equalities $C_2=\cdots=C_{12}=0$,
which produce 6 equations for the 6 unknown parameters $a,b,c,d,p,q$. The sufficiency
of the conditions so obtained will be proved by a simple argument which, implicitely, shows
that the equalities $C_2=\cdots=C_{12}=0$ yield $C_{2k}=0$ for $k\geq 7$.
Unfortunately, these 6 equations are so complicated that their solution requires the 
power of the Maple computer algebra package.
This will be done in the next section, where we also show the sequence of Maple commands
that were performed during our computation. If the interested reader executes all these 
commands in a Maple worksheet, then all the computations can be repeated and checked again.
During this computation, we analyze each of the Taylor coefficients separately and 
use the information so obtained at the subsequent steps.

\section{The proof of Theorem}

First we recall the characterization of the equality of two variable Gini means.

\begin{lemma}(Cf.\ \cite{Pal88c}) Let $a,b,c,d\in\R$. Then the identity
\Eq{G}{
  G_{a,b}(x,y)=G_{c,d}(x,y)\qquad(x,y\in\R_+)
}
holds if and only if one of the folowing possibilities is valid:
\begin{enumerate}[(i)]
\item $a+b=c+d=0$ and, in this case, the two means are equal to the 
geometric mean,
\item $\{a,b\}=\{c,d\}$.
\end{enumerate}
\end{lemma}

\begin{proof}[Proof of the Theorem]
Assume that \eq{IE} holds. Then the function $F$ defined by \eq{F} 
is identically $1$ on $\R$. Thus, for the $k$th-order
Taylor coefficient $C_k$ defined below, we have:
\Eq{C_k}{
   C_k:=\frac{F^{(k)}(0)}{k!}=0 \qquad (k\in\N).
}
Since $F$ is even, thus $C_k=0$ for all odd $k\in\N$.

In view of the symmetry of the Gini means in the parameters, we may assume that
$a\geq b$, $c\geq d$, and $p\geq q$ in the sequel.

For the calculations in Maple, define the Gini mean $G_{p,q}(x,y)=G(p,q,x,y)$ and 
the function $F$ given by \eq{F} by performing the following commands:
\begin{verbatim}
> G:=(p,q,x,y)->((x^p+y^p)/(x^q+y^q))^(1/(p-q));
  F:=x->(G(p,q,G(a,b,exp(x),exp(-x)),G(c,d,exp(x),exp(-x))))
     /(G(p,q,exp(x),exp(-x)));
\end{verbatim}
This yields the following output:
\Eq{*}{
   G:&= ( {p,q,x,y} )\mapsto \left( \frac{x^p+y^p}{x^q+y^q} \right)^{\frac{1}{p-q}}\\
   F:&= x\mapsto \frac{G\big(p,q,G(a,b,\operatorname{e}^{x},\operatorname{e}^{-x}),  
G(c,d,\operatorname{e}^{x},\operatorname{e}^{-x})\big)}{G(p,q,\operatorname{e}^{x},\operatorname{e}^
{-x})}
}
In the first step, we evaluate the 2nd-order Taylor coefficient $C_2$ by executing 
\begin{verbatim}
> C[2]:=simplify(coeftayl(F(x),x=0,2));
\end{verbatim}
This results
\Eq{*}{
  C_2:=\frac{1}{4}a+\frac{1}{4}b+\frac{1}{4}c+\frac{1}{4}d-\frac{1}2p-\frac{1}2q
}
Therefore, by $C_2=0$, we obtain our first necessary condition: $(a+b+c+d)/4=(p+q)/2$. 
If one tries to compute $C_4,C_6,\dots$, then the expressions obtained are so
complicated that it is hard to get further information. 
In order to simplify the evaluation of the higher-order Taylor coefficients, 
we introduce the following notations
\Eq{*}{
  w:=&\frac{a+b+c+d}{4}=\frac{p+q}2, \\
  v:=&\frac{a+b-(c+d)}{4},\\
  t:=&\left( \frac{p-q}2\right)^2,\\
  r:=&\frac{(a-b)^2+(c-d)^2}{8},\\
  s:=&\frac{(a-b)^2-(c-d)^2}{8}.
}
(In the definition of $w$ we utilized the condition $C_2=0$.)
Then we can express the parameters $a,b,c,d,p,q$ in the following form:
\begin{verbatim}
> a:=w+v+sqrt(r+s); b:=w+v-sqrt(r+s); 
  c:=w-v+sqrt(r-s); d:=w-v-sqrt(r-s); 
  p:=w+sqrt(t); q:=w-sqrt(t);
\end{verbatim}
\Eq{*}{
  a&:= w+v+\sqrt {r+s}\\  b&:= w+v-\sqrt {r+s}\\
  c&:= w-v+\sqrt {r-s}\\  d&:= w-v-\sqrt {r-s}\\
}
\Eq{*}{
  p&:= w+\sqrt {t}\\  q&:= w-\sqrt {t}
}
Now we evaluate the 4th order Taylor coefficient by inputting: 
\begin{verbatim}
> C[4]:=simplify(coeftayl(F(x),x=0,4));
\end{verbatim}
Then we obtain
\[
  C_4:=\frac{1}{3}tw-\frac{1}{3}vs-\frac{1}{3}wr
\]
The condition $C_4=0$ yields that $wt=wr+vs$. 

If $w=0$, then $p+q=0$ and hence $G_{p,q}$ is equal to the 
geometric mean. Therefore, the invariance equation \eq{IE} can be rewritten as
\Eq{*}{
  G_{a,b}(x,y)G_{c,d}(x,y)=xy \qquad(x,y\in\R_+).
}
This results
\Eq{*}{
  G_{a,b}(x,y)=\frac{1}{G_{c,d}(1/x,1/y)}=G_{-c,-d}(x,y)\qquad(x,y\in\R_+).
}
Using the Lemma again, this identity yields that either 
$a+b=c+d=0$ or $\{a,b\}=\{-c,-d\}$ must hold. Together with $p+q=0$, these 
equations show that either condition (i) or condition (iii) of our theorem 
must be satisfied. Conversely, if conditions (i) or (iii) hold then an easy
computation yields that \eq{IE} is satisfied.

In the rest of the proof, we assume that $w$ is not zero. Then, we can express 
$t$ in terms of $w,v,r,s$:
\begin{verbatim}
> t:=r+v*s/w;
\end{verbatim}
\Eq{t}{
  t:= r+\frac{vs}w
}
Next, we evaluate the 6th order Taylor coefficient:
\begin{verbatim}
> C[6]:=simplify(coeftayl(F(x),x=0,6));
\end{verbatim}
\Eq{*}{
  C_6:=\frac{-2(-3w^2s^2+3v^2s^2-15w^2rv^2-5w^3sv-10v^3sw+15w^4v^2)}{45w}
}

If $v=0$, then the condition $C_6=0$ simplifies to $w^2s^2=0$, whence $s=0$ follows. 
Therefore \eq{t} yields $t=r$ and we obtain that $a=c=p$ and $b=d=q$ which means that
condition (ii) of our theorem must be fulfilled. In this case, it is obvious
that \eq{IE} is satisfied.

In the rest of the proof, we assume that $v$ is also not zero. 
Observe that the 6th order coefficient $C_6$ does not involve higher-order 
powers of $r$. Therefore, the equation $C_6=0$ can be solved for $r$. Temporarily, 
we denote this solution by $R$:
\begin{verbatim}
> R:= (15*w^4*v^2-3*w^2*s^2+3*v^2*s^2-5*w^3*v*s-10*w*v^3*s)/(15*w^2*v^2);
\end{verbatim}
\Eq{R}{
  R:= \frac{15w^4v^2-3w^2s^2+3v^2s^2-5w^3vs-10wv^3s}{15w^2v^2}
}
Finally, we evaluate the 13th order Taylor polynomial of $F(x)$ at $x=0$ (the 
Maple output is suppressed by putting \, : \, instead of \, ; \, to the end of the Maple command, 
for the sake of brevity), then we extract the 8th, 10th and 12th order Taylor 
coefficients, replace $r$ by $R$ and factorize the expressions so obtained by inputting:
\begin{verbatim}
> T:=simplify(taylor(F(x),x=0,13)):
  for i from 8 to 12 by 2 
      do C[i]:=simplify(subs(r=R,simplify(coeff(T,x,i))),factor) od;
\end{verbatim}
\Eq{*}{
C_8&:=\frac {(v-w)(v+w)s}{70875w^3v^2} 
\big(2100w^{3}v^{5}-3850w^{2}v^{4}s+4200w^{5}v^{3}-255wv^{3}s^{2}+153v^{2}s^{3}\\
&\hspace{1cm}-9245w^{4}v^{2}s-7395w^{3}vs^{2}-153w^{2}s^{3}\big)
}
\Eq{*}{
C_{10}&:=\frac {2(v-w)(v+w)s}{1063125w^5v^4}
\big(28500w^5v^9-59675w^4v^8s+34470w^3v^7s^2+20100w^7v^7\\
&\hspace{1cm}-299575w^6v^6s-
     4260w^2v^6s^3-73200w^5v^5s^2-930wv^5s^4+66600w^9v^5\\
&\hspace{1cm}+4805w^4v^4s^3-
		 286500w^8v^4s+279v^4s^5-169020w^7v^3s^2-16740w^3v^3s^4\\
&\hspace{1cm}-558w^2v^2s^5+
      45955w^6v^2s^3+17670w^5vs^4+279w^4s^5\big)
}
\Eq{*}{
C_{12}&:=\frac {2(v-w)(v+w)s}{2631234375w^7v^6} 
\big( -3272692500w^{10}v^8s+22181100w^3v^9s^4-54365475w^6v^8s^3\\
&\hspace{4mm}-25317375w^8v^6s^3+335826w^4v^2s^7-559710wv^7s^6-215221875w^6v^{12}s\\
&\hspace{4mm}+22875570w^6v^4s^5+16977870w^5v^3s^6-7649370w^3v^5s^6-1246797750w^8v^{10}s\\
&\hspace{4mm}-34684335w^8v^2s^5-777170000w^{11}v^5s^2-159926550w^7v^5s^4-335826w^2v^4s^7\\
&\hspace{4mm}+641072375w^{10}v^4s^3+270963000w^5v^{11}s^2-1046615000w^9v^7s^2+11659365w^4v^6s^5\\
&\hspace{4mm}-133190250w^5v^7s^4-1967022000w^{12}v^6s+177650700w^9v^3s^4-8768790w^7vs^6\\
&\hspace{4mm}+385915750w^7v^9s^2+149400w^2v^8s^5-98002025w^4v^{10}s^3+76725000w^7v^{13}\\
&\hspace{4mm}-57172500w^9v^{11}-478665000w^{11}v^9+188100000w^{13}v^7-111942w^6s^7+111942v^6s^7\big)
}
(In the entire Maple computation, this is the only step which requires a considerable 
processing time. On a DualCore 2.6 GHZ processor, the elapsed time was less than 1 minute.)

$C_8, C_{10}$ and $C_{12}$ are obviously zero if $s(v-w)(v+w)=0$. 
Thus, we have to consider the three subcases: $s=0$, $v=w$, and $v=-w$.

In the case $s=0$, \eq{t} and \eq{R} imply that $t=r=w^2$. Then we get that
\Eq{*}{
  \{a,b\}=\{2w+v,v\},\qquad \{c,d\}=\{2w-v,-v\},\qquad \{p,q\}=\{2w,0\},
}
i.e., condition (iv) holds with $u:=2w$. Conversely, if condition (iv) holds and $u\neq0$,
then we have
\Eq{uv}{
  G_{p,q}&\big(G_{a,b}(x,y),G_{c,d}(x,y)\big)
    =G_{u,0}\big(G_{u+v,v}(x,y),G_{u-v,-v}(x,y)\big)\\
   &=\left(\frac{x^{u+v}+y^{u+v}}{2(x^{v}+y^{v})}
          +\frac{x^{u-v}+y^{u-v}}{2(x^{-v}+y^{-v})}\right)^{\frac{1}{u}}
    =\left(\frac{x^{u+v}+y^{u+v}}{2(x^{v}+y^{v})}
          +\frac{x^{u}y^v+y^{u}x^v}{2(x^{v}+y^{v})}\right)^{\frac{1}{u}}\\
   &=\left(\frac{x^{u}+y^{u}}{2}\right)^{\frac{1}{u}}=G_{u,0}(x,y)=G_{p,q}(x,y).
}
Thus \eq{IE} is satisfied if $u\neq0$. If $u=0$, then the parameters also fulfil 
condition (iii), hence \eq{IE} holds in this case, too.

If $v=w$, then, by \eq{t} and \eq{R}, $r=w^2-s$ and $t=r+s=w^2$, respectively.
Hence, 
\Eq{*}{
  \{a,b\}=\{3w,w\},\qquad c+d=0,\qquad \{p,q\}=\{2w,0\},
} 
i.e., condition (v) holds. Conversely, if condition (v) holds and $w\neq0$,
then, using the identity \eq{uv} with $u:=2w$, $v:=w$, we have
\Eq{*}{
  G_{p,q}\big(G_{a,b}(x,y),G_{c,d}(x,y)\big)
    &=G_{2w,0}\big(G_{2w+w,w}(x,y),G_{2w-w,-w}(x,y)\big)\\
    &=G_{2w,0}(x,y)=G_{p,q}(x,y),
}
which shows that \eq{IE} is fulfilled. If $w=0$, then all the three means are geometric means,
and hence \eq{IE} holds trivially.

The last case when $v=-w$ holds, similarly to the case $v=w$, implies that condition 
(vi) is valid. If condition (vi) holds, then \eq{IE} can also be verified.

In the rest of the proof, we can assume that $s(v+w)(v-w)$ is not zero.
The Taylor coefficients $C_8$, $C_{10}$, and $C_{12}$ are of the form
\Eq{*}{
  C_8=\frac {(v-w)(v+w)s}{70875w^3v^2}P_8,\quad 
  C_{10}=\frac {2(v-w)(v+w)s}{1063125w^5v^4}P_{10},\quad
  C_{12}=\frac {2(v-w)(v+w)s}{2631234375w^7v^6}P_{12},
}
where $P_8$, $P_{10}$, and $P_{12}$ are polynomials of the variables $v,w,s$.
They can be obtained by the following Maple commands (whose output is suppressed):
\begin{verbatim}
>  P[8]:=op(5,C[8]): P[10]:=op(5,C[10]): P[12]:=op(5,C[12]):
\end{verbatim}
The equalities $C_8=C_{10}=C_{12}=0$ and $s(v+w)(v-w)\neq0$ imply that 
$P_8=P_{10}=P_{12}=0$. In what follows, we show that there is no solution 
$v,w,s$ to this system of equations.

The variable $s$ is a common root of the polynomials $P_8$ and $P_{10}$. 
Therefore the resultant $R_{8,10}$ of these two polynomials (with respect to $s$) is zero:
\begin{verbatim}
> R[8,10]:=factor(resultant(P[8],P[10],s));
\end{verbatim}
\Eq{*}{
R_{{8,10}}:=& 136687500{w}^{15}{v}^{15} (v-w)^2(v+w)^2\\
   &\big( 1178440166794705680\,{v}^{18}-34849488132334981400\,{w}^{2}{v}^{16}\\
   & +27095657773476976150\,{w}^{4}{v}^{14}+2157163953185024831539\,{v}^{12}{w}^{6}\\
   & +19335728720363587723895\,{w}^{8}{v}^{10}+77098340762854904758838\,{w}^{10}{v}^{8}\\
   & +135541716064734053550290\,{w}^{12}{v}^{6}+52974528518488497499557\,{v}^{4}{w}^{14}\\
   & +2100034048587009260985\,{w}^{16}{v}^{2}+44498612407766474466\,{w}^{18}\big)  
}
Since $vw(v-w)(v+w)\neq0$ holds, we get that $v$ and $w$ are solutions of a 
homogeneous two variable polynomial equation of degree 18. Writing $w$ in the form
\begin{verbatim}
> w:=z*v;
\end{verbatim}
\Eq{*}{
  w:= zv
}
we get that $z$ is a root of a 18th degree polynomial $P_{8,10}$ (more usefully, 
$z^2$ is a root of a 9th degree polynomial), where:
\begin{verbatim}
> P[8,10]:=simplify(op(4,R[8,10])/v^18);
\end{verbatim}
\Eq{*}{
P_{8,10}:=&1178440166794705680-34849488132334981400{z}^{2}\\
   &+27095657773476976150{z}^{4}+2157163953185024831539{z}^{6}\\
   &+19335728720363587723895{z}^{8}+77098340762854904758838{z}^{10}\\
   &+135541716064734053550290{z}^{12}+52974528518488497499557{z}^{14}\\
   &+2100034048587009260985{z}^{16}+44498612407766474466{z}^{18}.
}
The variable $s$ is also a common root of the polynomials $P_8$ and $P_{12}$. 
Therefore the resultant $R_{8,12}$ of these two polynomials (with respect to $s$)
is again zero:
\begin{verbatim}
> R[8,12]:=factor(resultant(P[8],P[12],s)):
\end{verbatim}
Since $vw(v-w)(v+w)\neq0$ holds, we now get that $v$ and $w$ are solutions of a 
homogeneous two variable polynomial of degree 26, whence we get that $z$ is a 
root of the 26th degree polynomial $P_{8,12}$, where:
\begin{verbatim}
> P[8,12]:=simplify(op(4,R[8,12])/v^26);
\end{verbatim}
{\small\Eq{*}{
P&[8, 12] := 8196063700595383871701091232-179090512353635410423157248720z^2\\
&-2262574745604112043731392907114z^4+11198535065282946302316347517923z^6\\
&+369075355861065090753396085824722z^8+3321203212966063219800014204539694z^{10}\\
&+17018221168597358591328346358640128z^{12}+55161742271395394206883716537690208z^{14}\\
&+113024609788553283598449985201081964z^{16}+136472191224999845881431378284988722z^{18}\\
&+83840233563357841801204648333566258z^{20}+19391722782753178903737004919064981z^{22}\\
&+1234978033803167388960240130106010z^{24}+95711050739605210548400442203992z^{26}
}}
Now computing the resultant of the two polynomials $P_{8,10}$ and $P_{8,12}$ by
\begin{verbatim}
> Q:=resultant(P[8,10],P[8,12],z):
\end{verbatim}
it follows that $Q$ is a (huge) nonzero number, hence $P_{8,10}$ and $P_{8,12}$ cannot 
have a common root. This proves that $R_{8,10}$ and $R_{8,12}$ can be simultaneously zero
if and only if $vw(v-w)(v+w)=0$ holds. Hence $C_8=C_{10}=C_{12}=0$ can only hold
also in this case.
\end{proof}

%\def\lang#1#2{#2}
%\bibliography{publ,funcequ,control}
%\bibliographystyle{amsplain}
\def\MR#1{}

\end{document}